\documentclass[12pt, reqno]{amsart}
\usepackage{amsmath}

\usepackage{amsfonts}
\usepackage{amssymb}
\usepackage{comment}
\usepackage{graphicx}
\usepackage{xcolor}
\usepackage{enumitem}
\usepackage[margin=1in,footskip=0.25in]{geometry}
\providecommand{\U}[1]{\protect\rule{.1in}{.1in}}
\newtheorem{theorem}{Theorem}

\newtheorem{corollary}[theorem]{Corollary}

\newtheorem{definition}[theorem]{Definition}

\newtheorem{lemma}[theorem]{Lemma}

\newtheorem{proposition}[theorem]{Proposition}
\newtheorem{remark}[theorem]{Remark}

\begin{document}

\title[Heisenberg uncertainty for lca groups]{Heisenberg uncertainty inequalities for locally compact abelian groups}


\author{Hartmut F\"uhr}

\address{Lehrstuhl f\"ur Geometrie und Analysis, RWTH Aachen University, D-52056 Aachen,
Germany}

\email{fuehr@mathga.rwth-aachen.de}

%
%
%
%




\begin{abstract}
We prove a version of Heisenberg's uncertainty principle for a rather general class of locally compact abelian groups. We compare the lower bound provided by our approach with the optimal lower bound in the Euclidean case, and formulate the Heisenberg uncertainty principle for local fields. 
\end{abstract}

\keywords{Heisenberg uncertainty principle; locally compact abelian groups; time-frequency concentration; local fields}



\maketitle

\section{Introduction}\label{sec1}

Heisenberg's uncertainty principle for the real line states that, for all functions $f \in L^2(\mathbb{R})$
\begin{equation} \label{eqn:HI}
\inf_{a,b \in \mathbb{R}}
\int_{\mathbb{R}} |x-a|^2 |f(x)|^2 dx \int_{\mathbb{R}}|\xi-b|^2 |\widehat{f}(\xi)|^2 d\xi \ge 
\frac{\| f \|_2^4}{16 \pi^2}~.
\end{equation}
Here we used the notation 
\[
\widehat{f}(\xi) = \int_{\mathbb{R}} f(x) e^{-2\pi i \xi x} dx
\] for the Fourier transform $\widehat{f}$ of $f \in L^1(\mathbb{R})$, suitably extended to the Plancherel transform on $L^2(\mathbb{R})$.

To cite the informal interpretation from \cite{MR1448337}: 
\textit{A function and its Fourier transform cannot both be sharply localized.}
Subsequent to Heisenberg's inequality, various versions of the uncertainty principle were conceived, using different notions of concentration. In order to keep this introduction on a scale that is compatible with the overall length of the paper, we will only name (but not formulate) Hardy's uncertainty principle \cite{MR1574130}, the qualitative uncertainty principle \cite{MR780328} and the Donoho-Stark uncertainty principle \cite{MR997928} as representatives of a fairly long list of possible alternative meanings, each of which resulted in uncertainty principles of varying scope. Overviews (by no means complete), which cover these and other results, can be found in \cite{MR1448337,MR3265736,MR4229152}. To this day uncertainty principles and related ideas of measures of concentration permeate various areas of mathematics, such as mathematical signal processing, approximation theory, or the theory of compressed sensing. 

Each of the uncertainty principles explicitly mentioned above has been generalized to more general locally compact groups, even beyond the abelian setting; see e.g. \cite{MR2587584,MR3336090,MR4744878,MR1353372,MR1181081,MR2122203} for a small, subjective sample of the relevant literature. 
Group-theoretic generalizations of Heisenberg's uncertainty principle mostly stayed within the realm of Lie groups; see e.g. \cite{MR2355602,MR3336090}. To our knowledge, generalizations to more general lca groups have been largely unexplored, with the notable exception \cite{MR2126447}.

This paper shows that many locally compact abelian groups (from now on abbreviated to lca groups)  allow a version of Heisenberg's uncertainty principle, where time-frequency concentration is quantified by integrating squared distances against the modulus squared of the function and its Fourier transform, where the distance are measured by suitable choices of invariant metrics on the group and its dual. 

\section{Notations and statement of the main theorem}

We let $G$ denote a locally compact abelian group and $\widehat{G}$ its character group. Both $G$ and $\widehat{G}$ will be written multiplicatively; the neutral elements of both groups are denoted as $e$ (by slight abuse of notation). Haar integration of a function on $G$ is written as 
\[
\int_G f(x) dx~,
\] and analogously for functions on $\widehat{G}$. Given a Borel subset $A \subset G$, $|A|$ denotes its Haar measure, i.e. $|A| = \int_A 1 dx$. The same notation will be employed for subsets $ B \subset \widehat{G}$. We let $L^2(G)$ denote the $L^2$-space of $G$ with respect to Haar measure. Given $f \in L^2(G)$, we let $\widehat{f}$ denote its Plancherel transform, and we assume that the Haar measures of $G$ and $\widehat{G}$ are normalized to guarantee the equality $\| f \|_2^2 = \| \widehat{f} \|_2^2$ for all $f \in L^2(G)$. 

Let $d$ and $\widehat{d}$ denote suitable metrics on $G$ and $\widehat{G}$, respectively. We will denote open balls in $G$ by 
\[
B_r^d(x) = \{ y \in G : d(x,y) < r \}~;
\] We will omit the superscript $d$ whenever it is clear from the context. The analogously defined open balls in $\widehat{G}$ are denoted by $\widehat{B}_r(\xi)$.
The following definition formulates a list of basic conditions on the metrics. While our main result will concern metrics, it will be useful for the following to discuss the larger classes of \textit{pseudometrics}, which allows $d(x,y) = 0$ to hold for distinct $x,y$

\begin{definition} \label{defn:main}
Given a locally compact abelian group $G$, and a pseudometric $d: G \times G \to \mathbb{R}$.
\begin{enumerate}
    \item[(i)] $d$ is called \textbf{invariant} if $\forall x,y,z \in G$ one has $d(xy,xz) = d(y,z)$.
    \item[(ii)] $d$ is called \textbf{proper} if for all $r>0$ the open balls $B_r(e) = \{ x \in G : d(x,e) < r \}$ are relatively compact. 
    \item[(iii)] $d$  \textbf{has polynomial growth} if there exist $a,A,t>0$ such that
 \begin{equation}  \label{eqn:pg} \forall r>0\,\,\,:\,\,\|B_r(0)| \le a + A r^t \end{equation}
   $d$ has \textbf{homogeneous polynomial growth} if (\ref{eqn:pg}) holds with $a=0$.
\end{enumerate}
\end{definition}

\begin{remark}
\begin{enumerate}
\item[(a)] Since $|B_r(0)| \ge |\{ e \}|$ for all $r>0$, discrete groups cannot have homogeneous polynomial growth.
\item[(b)] A finitely generated group is called \textbf{of polynomial growth} if the word metric with respect to any finite system of generators has polynomial growth. 

It is well-known that all finitely generated abelian groups have polynomial growth. However, there also exist discrete abelian groups allowing invariant metrics of polynomial growth that are not finitely generated; see Remark \ref{ex:non_fg}.
\end{enumerate}
\end{remark}


With these definitions and notations in place, we can now formulate the main theorem: 
\begin{theorem} \label{thm:main}
Let $G$ be a locally compact abelian group, with character group $\widehat{G}$. Let $d,\widehat{d}$ denote continuous invariant proper metrics with homogeneous polynomial growth of order $t$, i.e. fulfilling
\[
\forall r > 0~:~ |B_r(e)| \le A r^t ~,~ |\widehat{B}_r(e)| \le \widehat{A} r^t~,
\] with suitable constants $t,A, \widehat{A}>0$. Then all $f \in L^2(G)$ fulfill the inequality \begin{equation} \label{eqn:Heisenberg}
\inf_{a \in G, b \in \widehat{G}} \left( \int_G d(x,a)^2 |f(x)|^2 dx \int_{\widehat{G}} \widehat{d}(\xi,b)^2 |\widehat{f}(\xi)|^2 d\xi \right) \ge C_G \|f\|_2^4~.
\end{equation}
Here $C_G$ is given by 
\begin{equation} \label{eqn:def_C_G}
C_G = \frac{\alpha_t}{A^{2/t} \widehat{A}^{2/t}}~,~ \alpha_t = \max_{\delta \in (0,1/2)} (1-2 \delta)^{4/t} (2 \delta - \delta^2)^4~.
\end{equation}
\end{theorem}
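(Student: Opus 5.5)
By invariance of $d$ and $\widehat{d}$ we reduce to $a=e$, $b=e$. Replacing $f$ by $g(x)=\overline{b(x)}\,f(ax)$ leaves $\|f\|_2$ unchanged and turns $\int_G d(x,a)^2|f(x)|^2dx$ into $\int_G d(x,e)^2|g(x)|^2dx$, using $d(ax,a)=d(x,e)$. It translates $\widehat{f}$ by $b$, so the frequency integral becomes $\int_{\widehat{G}} \widehat{d}(\xi,e)^2|\widehat{g}(\xi)|^2d\xi$. It therefore suffices to show that every $f$ with $\|f\|_2=1$ satisfies $\Delta\widehat{\Delta}\ge C_G$, where
\[
\Delta=\int_G d(x,e)^2|f(x)|^2dx,\qquad \widehat{\Delta}=\int_{\widehat{G}}\widehat{d}(\xi,e)^2|\widehat{f}(\xi)|^2d\xi .
\]
We may assume both quantities are finite. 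Both are also positive. Indeed, $\Delta=0$ forces $f$ to vanish off $\{e\}$, and homogeneous growth gives $|\{e\}|\le |B_r(e)|\le Ar^t\to 0$, so $f=0$. Continuity of the metrics makes the balls open, hence Borel, and properness together with the growth bound makes their measures finite.

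The core is a Chebyshev/Donoho--Stark type argument with the balls $S=B_R(e)$ and $\Sigma=\widehat{B}_\rho(e)$. First, Chebyshev gives
\[
\|f\mathbf 1_{G\setminus S}\|_2^2\le \Delta/R^2,\qquad \|\widehat{f}\mathbf 1_{\widehat{G}\setminus\Sigma}\|_2^2\le \widehat{\Delta}/\rho^2 .
\]
Second, for $g=f\mathbf 1_S$ we have $\|\widehat{g}\|_\infty\le\|g\|_1\le |S|^{1/2}\|f\|_2$. Hence
\[
\|\widehat{g}\mathbf 1_\Sigma\|_2\le |\Sigma|^{1/2}|S|^{1/2}\le (A\widehat{A})^{1/2}(R\rho)^{t/2}.
\]
Third, Plancherel gives $\|\widehat{f}-\widehat{g}\|_2=\|f-g\|_2$, so
\[
\big(1-\widehat{\Delta}/\rho^2\big)^{1/2}\le \|\widehat{f}\mathbf 1_\Sigma\|_2\le (A\widehat{A})^{1/2}(R\rho)^{t/2}+(\Delta/R^2)^{1/2}.
\]

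Next I choose the radii in terms of a parameter $\delta\in(0,1/2)$. Set $R^2=\Delta/\delta^2$ and $\rho^2=\widehat{\Delta}/\delta^2$, both admissible since $\Delta,\widehat{\Delta}>0$. The inequality becomes
\[
(A\widehat{A})^{1/2}(R\rho)^{t/2}\ge \sqrt{1-\delta^2}-\delta>0 .
\]
Since $R\rho=(\Delta\widehat{\Delta})^{1/2}/\delta^2$, this rearranges to
\[
\Delta\widehat{\Delta}\ \ge\ \frac{\delta^4\big(\sqrt{1-\delta^2}-\delta\big)^{4/t}}{(A\widehat{A})^{2/t}} .
\]
Maximizing over $\delta$ gives a bound of exactly the shape of (\ref{eqn:def_C_G}).

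The remaining, and I expect most delicate, step is to match the precise constant $\alpha_t=\max_\delta(1-2\delta)^{4/t}(2\delta-\delta^2)^4$. The main obstacle is bookkeeping rather than the idea: it depends on how the two tail masses are parametrized and on whether one works with squared norms. The form $2\delta-\delta^2=1-(1-\delta)^2$ and the factor $(1-2\delta)$ suggest the following normalization. Require the retained masses to satisfy $\|f\mathbf 1_S\|_2\ge 1-\delta$ and $\|\widehat{f}\mathbf 1_\Sigma\|_2\ge 1-\delta$, so the Chebyshev tails are $\le 2\delta-\delta^2$ in squared norm. Then bound $|\langle \widehat{f}\mathbf 1_\Sigma,\widehat{f\mathbf 1_S}\rangle|$ from below by roughly $1-2\delta$ via the triangle inequality, and from above by $|S|^{1/2}|\Sigma|^{1/2}$. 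I will try this variant and adjust the choice of $R,\rho$ until $\alpha_t$ appears. If a different but valid constant results, the theorem still holds with it, since any $\delta$-admissible choice yields (\ref{eqn:Heisenberg}) with a positive constant of the form $\alpha/(A\widehat{A})^{2/t}$.
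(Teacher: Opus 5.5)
Your analytic core is correct and is essentially the paper's argument. It has three ingredients. The first is a Chebyshev tail estimate, which is the paper's Proposition~\ref{prop:main}; applying Chebyshev directly on $G$, as you do, makes the disintegration over spheres unnecessary. The second is the bound $(|S|\,|\Sigma|)^{1/2}$ for the time- and band-limiting operator, which you get via $\|\widehat{g}\|_\infty\le\|g\|_1$ and the paper via the Hilbert--Schmidt norm of $P_rQ_s$. The third is a triangle inequality.

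\textbf{The gap is the constant.} What you have proved is (\ref{eqn:Heisenberg}) with $\alpha_t$ replaced by $\gamma_t=\max_{0<\delta<1/2}\delta^4(\sqrt{1-\delta^2}-\delta)^{4/t}$. This is strictly smaller: $\gamma_t<\alpha_t$ for every $t$. To see it, substitute $u=2\delta-\delta^2$ in the definition of $\alpha_t$ and use $2\sqrt{1-u}-1>\sqrt{1-u^2}-u$ for $0<u<1/\sqrt2$. For instance $\gamma_1=\bigl((\sqrt2-1)/2\bigr)^4\approx 0.00184$, while $\alpha_1\approx 0.00234$.

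Your closing sentence does not repair this. The theorem asserts the specific constant $C_G$, and an inequality with a smaller constant is a weaker theorem.

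Retuning $R,\rho$ will not close the gap in general either. With independent tail levels $a=\sqrt{\Delta}/R$ and $b=\sqrt{\widehat{\Delta}}/\rho$, your scheme yields the constant $(ab)^2(\sqrt{1-b^2}-a)^{4/t}$. At $t=1$ its maximum is about $0.00231<\alpha_1$. For every $t$ it never exceeds $a^2b^2<b^2(1-b^2)\le 1/4$, whereas $\alpha_t\to(3/4)^4$ as $t\to\infty$.

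The variant you sketch breaks at exactly the step you call bookkeeping. The hypothesis $\|f\mathbf{1}_S\|_2\ge 1-\delta$ only gives $\|f-f\mathbf{1}_S\|_2\le(2\delta-\delta^2)^{1/2}$, which exceeds $\delta$. So the triangle inequality produces $1-\delta-(2\delta-\delta^2)^{1/2}$, not $1-2\delta$.

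You should know that this is precisely how the paper arrives at $\alpha_t$. It infers $\|f-P_rQ_sf\|_2\le 2\delta$, hence (\ref{eqn:PQf}), from $\|P_rf\|_2\ge1-\delta$ and $\|Q_sf\|_2\ge 1-\delta$, which is the same unjustified step. It then writes $(2\delta-\delta^2)^{t}$ where $r_0^{t/2}s_0^{t/2}$ only produces $(2\delta-\delta^2)^{t/2}$. That slip is a harmless weakening, but it is where the fourth power in $\alpha_t$ comes from.

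Carried out correctly, the paper's route gives $\|P_rQ_sf\|_2\ge\|P_rf\|_2-\|f-Q_sf\|_2\ge(1-\delta)-(2\delta-\delta^2)^{1/2}$. Setting $u=(2\delta-\delta^2)^{1/2}$, so that $1-\delta=\sqrt{1-u^2}$, this is exactly your bound $u^4(\sqrt{1-u^2}-u)^{4/t}/(A\widehat{A})^{2/t}$.

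So your proposal is a correct proof, by the paper's own method, of the uncertainty inequality with the constant that method actually delivers. But neither your proposal nor the paper's proof establishes the stated $\alpha_t$. Moreover, for large $t$ no single choice of radii in this Chebyshev-based scheme can do so. Tail levels $a,b$ yield a positive lower bound on $\|P_rQ_s\|_{op}$ only when $a^2+b^2<1$, and this caps the resulting constant at $1/4<(3/4)^4$.
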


Note that the assumptions in Theorem \ref{thm:main} on $G$ and $\widehat{G}$, as well as its conclusion, are perfectly symmetric with respect to $G$ and $\widehat{G}$.

The following propositions clarify the availability of metrics needed to formulate the uncertainty principle. A useful tool for this purpose will be a structure theorem \cite[Corollary 3]{MR564688}: Every lca group $G$ is topologically isomorphic to a direct product, $G \cong \mathbb{R}^m \times H$, where $H$ is an lca group possessing an open compact subgroup $K$.

The first result looks at $G$ in isolation. 
\begin{proposition} \label{prop:ex_metric}
Let $G$ denote a second countable lca group, $G \cong \mathbb{R}^n \times H$, and $H$ has a compact open subgroup $K$. Then $G$ admits an invariant metric with homogeneous polynomial growth if $H$ is nondiscrete and $H/K$ admits an invariant metric of polynomial growth.
\end{proposition}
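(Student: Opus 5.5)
Since $H$ is nondiscrete and $K$ is open, $K$ is infinite; as a compact group with Haar measure, $K$ then contains arbitrarily small open subgroups, or at least a neighbourhood basis of $e$ of small measure. By second countability there is a decreasing sequence of compact open subgroups $K = K_0 \supset K_1 \supset K_2 \supset \dots$ forming a neighbourhood basis of $e$ in $H$. This uses van Dantzig's theorem applied to the totally disconnected part; if $K$ has a connected component we must handle it separately, see below. Because $H$ is nondiscrete, $|K_j| \to 0$, and each $|K_j|/|K_{j+1}| = [K_j:K_{j+1}]$ is an integer $\ge 2$ after passing to a subsequence. Let $\rho$ be the given invariant metric on $H/K$ with $|B^\rho_r| \le a + A r^t$; we may replace $\rho$ by an integer-valued metric, e.g. $\lceil \rho \rceil$, which is still invariant, satisfies the triangle inequality, and keeps polynomial growth with modified constants.

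The metric on $H$ is built in two regimes. For $x,y \in H$ with $xy^{-1} \notin K$, put $d_H(x,y) = \rho(xK,yK)\ge 1$. For $xy^{-1} \in K$, put $d_H(x,y) = \epsilon_j$ when $xy^{-1} \in K_j \setminus K_{j+1}$, and $d_H(x,x)=0$. Here $\epsilon_j \downarrow 0$ and $\epsilon_0 \le 1$. This is an ultrametric on $K$ glued to $\rho$, so invariance and the triangle inequality follow from the subgroup property. The ball $B_r(e)$ is $K_{j}$ for $\epsilon_{j} < r \le \epsilon_{j-1}$, and $|B_r(e)| = |K|\cdot \#\{\text{cosets within }\rho\text{-distance}<r\}$ for $r>1$. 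To get homogeneous growth, choose $\epsilon_j$ so that $|K_j| \le A' \epsilon_j^{t'}$ for some $t'$, e.g. $\epsilon_j := (|K_j|/|K|)^{1/t'}$. This is strictly decreasing with limit $0$, and then $|B_r| \le |K_j| \le |K| (\epsilon_{j-1}/\epsilon_j)^{t'} \ldots$. Since we only need an inequality, a cleaner choice is $\epsilon_j = (|K_{j}|/|K|)^{1/t}$ together with the bound $|B_r(e)| = |K_j| = |K|\epsilon_j^t \le |K| r^t$ for $r > \epsilon_j$. For $r\ge 1$, $|B_r| \le |K|(a+Ar^t) \le |K|(a+A) r^t$. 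Continuity holds because small balls are the open subgroups $K_j$, and properness follows from properness of $\rho$ and compactness of $K$.

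On $\mathbb{R}^n$ use the Euclidean metric, whose balls have measure $c_n r^n$. For the product take $d((u,x),(v,y)) = \max(|u-v|, d_H(x,y))$. Then $B_r = B^{\mathbb{R}^n}_r \times B^{H}_r$, giving $|B_r| \le c_n A'' r^{n+t}$, which is homogeneous polynomial growth; invariance, continuity and properness pass to the product. When $n=0$ the $H$ part alone suffices.

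The main obstacle is the existence of the small compact open subgroups $K_j$. A nondiscrete $H$ with compact open $K$ need not be totally disconnected, since $K$ could be, e.g., a torus. I would handle this by choosing instead a decreasing neighbourhood basis of $e$ in $K$ and using a compact-group metric. Alternatively, it may be simplest to use a continuous invariant metric $\delta$ on the metrizable compact $K$ (possible since $G$ is second countable), and set $d_H = \delta^{s}$ for small $s$ so that $|B_r| \lesssim r^t$. This still requires a lower bound on how fast balls in $K$ shrink, $|B^\delta_r|\le C r^{\beta}$ for some $\beta>0$, which need not hold for an arbitrary $\delta$. The robust fix is to define the metric via the measure itself, $\delta'(x,y) := \sup\{|U|: \ldots\}$, in the spirit of the $\epsilon_j = |K_j|^{1/t}$ choice. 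Checking the triangle inequality for such a measure-defined metric in the connected case is where I expect the real work.
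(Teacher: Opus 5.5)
Your construction is correct when $K$ is totally disconnected. The ultrametric whose small balls are the subgroups $K_j$, the choice $\epsilon_j=(|K_j|/|K|)^{1/t}$, the gluing, and the growth estimates all work. So does replacing $\rho$ by $\lceil\rho\rceil$ to force separation $\ge 1$ between distinct cosets; the paper instead rescales, using that $\rho$-balls are finite.

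However, the proposition allows any compact open $K$, for instance $H=\mathbb{T}\times\mathbb{Z}$ with $K=\mathbb{T}$, or $H=K=\mathbb{T}^{\mathbb{N}}$, and there $K$ has no small open subgroups at all. For this case you only list candidate fixes. You explicitly leave the key step unproved: producing a continuous invariant metric on $K$ that satisfies the triangle inequality and whose small balls have measure $O(r^t)$. As you note yourself, an arbitrary invariant metric, even after taking a power $\delta^s$, need not give any power bound. So the argument is incomplete exactly where the general case differs from the totally disconnected one.

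The missing idea is that you need neither subgroups nor a hand-checked triangle inequality for a measure-defined metric. Replace the subgroup property $K_{j+1}K_{j+1}\subseteq K_j$ by the condition $U_{n+1}^2\subseteq U_n$ on symmetric open neighbourhoods of $e$, and invoke the Birkhoff--Kakutani metrization theorem (Hewitt--Ross, Thm.~8.2); this is what the paper does. Since $K$ is nondiscrete, $|\{e\}|=0$. Using a countable neighbourhood base and continuity from above of the finite measure on $K$, you can choose recursively $U_0=K$, $U_n=U_n^{-1}$, $U_{n+1}^2\subseteq U_n$, $\bigcap_n U_n=\{e\}$ and $|U_n|\le 2^{-nt}|K|$. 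The metrization theorem then gives a continuous invariant metric $d_K\le 1$ on $K$ with $B^{d_K}_{2^{-n}}(e)\subseteq U_n$. Hence $|B^{d_K}_r(e)|\le |U_n|\le 2^t|K|\,r^t$ whenever $2^{-n-1}<r\le 2^{-n}$.

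Your gluing with $\lceil\rho\rceil$ then goes through unchanged. Your case analysis for the triangle inequality across different cosets only used $d_K\le 1\le\lceil\rho\rceil$, not the ultrametric property. The same is true of your estimate for $r\ge 1$ and of the product with $\mathbb{R}^n$.
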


\begin{proof}
Assume that $H/K$ admits an invariant metric with polynomial growth of order $t$. Pulling this map back via the quotient map $H \to H/K$, we obtain a pseudometric $d_1$ on $A$ of polynomial growth. Since $H/K$ is discrete, all metric balls in $H/K$ are finite, hence there exists a uniform lower bound $c < d_1(x,y)$ for all $x,y \in H, x^{-1}y  \not\in K$. After rescaling $d_1$, if necessary, we may assume $c=1$. 

Next assume that $A$ is nondiscrete, or equivalently, that $K$ is compact and nondiscrete. We assume that its Haar measure is normalized by $|K| = 1$. There exists a sequence $(U_n)_{n \in \mathbb{N}_0}$ of open, relatively compact neighborhoods $U_n \subset K$ of the identity satisfying the following conditions: $U_{0} = K$, 
\[ \forall n \in \mathbb{N}_0 \,\,:\,\, U_n = U_n^{-1}\,,\, U_{n+1}^2 \subset U_n\,, \]
and in addition $\bigcap_{n \in \mathbb{N}} U_n = \{ e \}$ and 
\[
\forall n \in \mathbb{N}_0: |U_n| \le 2^{-nt}~.
\]
By a well-known metrization result (e.g. \cite[Theorem 8.2]{MR551496}, there exists a continuous, invariant metric $d_K$ on $K$ satisfying
\[
B^{d_K}_{2^{-n}}(e) \subset U_{n}\,\,,
\] in particular $d_K \le 1$. 
But this entails 
\[
|B^{d_K}_r(e)| \le|U_| \le r^t \,\,  
\] for all $r = 2^{-n}$, and then
\[ |B^{d_K}_r(e)| \le 2r^t \] for all $0 < r < \infty$.

Hence $d_K$ has homogeneous polynomial growth of order $t$. Since $d_K$ is bounded by one, we can define 
\[
d_2(x,y) = \left\{ \begin{array}{cc} d_K(x^{-1}y,e) & \mbox{ if } x^{-1}y \in K \\ 1 & \mbox{otherwise} \end{array} \right.
\]
and obtain an invariant metric on $d_2$ on $H$ that extends $d_K$. One then readily verifies that 
$d_H = \max(d_1,d_2)$ defines a metric on $H$ with homogeneous polynomial growth of order $t$: For $0<r < 1$, a homogeneous estimate is provided by the observation
$B_r^{d_H}(e) = B_r^{d_K}(e)$, whereas for $r\ge1$, 
\[ B_r^{d_A}(e) = B_r^{d_1}(e) \,\,\] and the inhomogenous estimate available for $|B_r^{d_1}(e)|$ implies a homogeneous one of the same growth order holding uniformly in $[1,\infty)$. 

We define the invariant metric $d_G$ on $G = \mathbb{R}^n \times H$ by
\begin{equation} \label{eqn:prod_met} d_G ((u,x),(v,y)) = \max (|u-v|, d_H(x,y) ) \,\,,\end{equation}
leading to $B_{r}^{d_G}(e) = B_r(0) \times B_r^{d_H}(e)$, and thus $d_G$ is of homogeneous growth of order $t+m$.
\end{proof}
The following result formulates conditions on $G$ that guarantee the existence of a pair of metrics giving rise to an uncertainty inequality. 
\begin{corollary}
Let $G$ be an lca group with dual group $\widehat{G}$. Let $G \cong \mathbb{R}^d \times A$, where $A$ is an lca group possessing a compact open subgroup $K$. Assume that $A$ is nondiscrete, and that both discrete groups $A/K$ and $\widehat{K}$ admit invariant metrics with polynomial growth. Then there exists a pair $d,\widehat{d}$ of metrics on $G$ and $\widehat{G}$ respectively, that fulfill the requirements of Theorem \ref{thm:main}.
\end{corollary}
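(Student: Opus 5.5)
The plan is to apply Proposition \ref{prop:ex_metric} twice: once to $G$ and once to $\widehat{G}$. The one extra point is that both metrics must have homogeneous polynomial growth of the \emph{same} order $t$, as Theorem \ref{thm:main} requires.

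\emph{Step 1 (duality bookkeeping).} Write $\widehat{G} \cong \mathbb{R}^d \times \widehat{A}$. Let $K^\perp \subset \widehat{A}$ be the annihilator of $K$. Since $K$ is open, $A/K$ is discrete, so $K^\perp \cong \widehat{A/K}$ is compact. Since $K$ is compact, $K^\perp$ is open. By Pontryagin duality, $\widehat{A}/K^\perp \cong \widehat{K}$, which is discrete. Thus the hypotheses on $A/K$ and $\widehat{K}$ say exactly that the discrete quotients $A/K$ and $\widehat{A}/K^\perp$ both carry invariant metrics of polynomial growth.

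\emph{Step 2 (second countability).} A metric of polynomial growth on a discrete group has finite balls, so $A/K$ and $\widehat{K}$ are countable. Hence $K$ is a compact group with countable dual, so $K$ is metrizable and second countable. Together with $A/K$ countable, this makes $A$, and hence $G$, second countable. The same argument applied to $K^\perp$ and $\widehat{A}/K^\perp$ shows that $\widehat{G}$ is second countable. So the standing assumption of Proposition \ref{prop:ex_metric} holds on both sides.

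\emph{Step 3 (nondiscreteness).} Proposition \ref{prop:ex_metric} needs $A$ nondiscrete, which is assumed, and for the dual it needs $\widehat{A}$ nondiscrete, i.e.\ $A$ noncompact, i.e.\ $A/K$ infinite. This is the step I expect to be the real obstacle. If $A/K$ is finite, $\widehat{A}$ is discrete, and the Proposition cannot be invoked on the dual side. I would try to treat this case by hand when $d \ge 1$. In that case, on the discrete factor $\widehat{A}$ use the pulled-back metric $d_1$, which separates points because $K^\perp$ is finite. The product metric $\max(|u-v|, d_1)$ then has balls of radius $r < c$ equal to an interval times $\{e\}$, of measure $\lesssim r^d \le r^{t}$ for small $r$ once $t \ge d$. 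If $d = 0$ and $A$ is compact, no homogeneous bound is possible on the discrete group $\widehat{G}$ (by the Remark), so this degenerate case must be excluded or read into the hypotheses.

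\emph{Step 4 (equalizing orders, properness, continuity).} Fix $t$ at least as large as the growth orders of the given metrics on $A/K$ and $\widehat{K}$. An inhomogeneous bound $a + A'r^{s}$ with $s \le t$ implies, for $r \ge 1$, a bound $(a+A')r^t$. In the proof of Proposition \ref{prop:ex_metric}, the local exponent $t$ in $|U_n| \le 2^{-nt}$ is a free choice. Running that construction with this common $t$ on both sides gives metrics on $A$ and $\widehat{A}$, and then the $\max$-product metrics on $G$ and $\widehat{G}$, all of homogeneous growth of order $t + d$.

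It remains to check that these metrics are continuous, invariant and proper. Continuity and invariance come directly from the metrization theorem and the $\max$ construction. For properness, a $d_H$-ball is contained in a $d_1$-ball, which is a finite union of cosets of the compact group $K$, so it is relatively compact; the same holds after taking the product with $\mathbb{R}^d$. Hence the pair $d, \widehat{d}$ satisfies all hypotheses of Theorem \ref{thm:main}.
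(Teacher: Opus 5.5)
The core of your argument is the paper's own: identify $K^\perp$ as a compact open subgroup of $\widehat{A}$ with $\widehat{A}/K^\perp\cong\widehat{K}$, choose a common exponent, and run the construction of Proposition~\ref{prop:ex_metric} on both sides. Your checks of second countability and properness supply points the paper leaves implicit. You are also right that the paper's argument silently needs $\widehat{A}$ nondiscrete, i.e.\ $A$ noncompact. Without that assumption the statement can fail: $G=A=K=\mathbb{T}$ satisfies every hypothesis, yet $\widehat{G}=\mathbb{Z}$ carries no metric of homogeneous polynomial growth.

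\emph{The error in Step 3.} Your patch for the case $A$ compact, $d\ge 1$ does not work. The claim ``$r^d\le r^t$ for small $r$ once $t\ge d$'' is backwards: for $0<r<1$ one has $r^d\le r^t$ if and only if $t\le d$. In fact the plain product metric $\max(|u-v|,d_1)$ can never work here. For small $r$ its balls have measure a constant times $r^d$, which forces $t\le d$. On the other hand, $\widehat{A}$ is infinite, since it maps onto $\widehat{K}$ and $K$ is an infinite compact group. Hence $|B_r(e)|/r^d=c\,\#B^{d_1}_r(e)\to\infty$ as $r\to\infty$. This contradicts $|B_r(e)|\le A'r^t\le A'r^d$ for $r\ge 1$.

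\emph{How to repair it.} You have to raise the small-scale exponent of the Euclidean factor. For instance, replace $|u-v|$ by $\phi(|u-v|)$, where $\phi(s)=s^{d/T}$ on $[0,1]$ and $\phi(s)=1+\frac{d}{T}(s-1)$ on $[1,\infty)$. This $\phi$ is concave, so $\phi(|u-v|)$ is an invariant metric. With a separated metric on $\widehat{A}$, balls of radius $r\le 1$ then have measure $\asymp r^T$. Balls of radius $r\ge 1$ have measure $\lesssim r^{d+s}$, where $s$ is the growth order on $\widehat{A}$. So any $T\ge d+s$ gives homogeneous growth of order $T$. On the $G$ side, where $A$ is compact, the Proposition's construction yields order $d+t$ for any $t>0$, so take $t=T-d$.

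\emph{A smaller slip in the same step.} The pullback $d_1$ of a metric on $\widehat{A}/K^\perp$ vanishes on $K^\perp$-cosets, so it does not separate points unless $K^\perp$ is trivial. You must take its maximum with the discrete $0$--$1$ metric. Finiteness of $K^\perp$ only guarantees that this does not destroy the polynomial growth.
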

\begin{proof}
    $G \cong \mathbb{R}^n \times A$ entails $\widehat{G} \cong \mathbb{R}^n \times \widehat{A}$. If $K < A$ is a compact open subgroup, let $K^\bot \subset \widehat{A}$ denote the \textit{annihilator} of $K \i \widehat{G}$, i.e. the subgroup of characters of $A$ that are duality trivial on $K$. Pontryagin-van Kampen duality yields $\widehat{A/K} \cong K^\bot$. Furthermore,  the exact sequence
    \[
    1 \to K \to A \to A/K \to 1 
    \]
    gives rise to the exact sequence
    \[
    1 \to K^\bot \to \widehat{A} \to \widehat{K} \to 1\,\,.
    \] Here $\widehat{A/K}$ is compact because $A/K$ is discrete; and $\widehat{K} \cong \widehat{A}/\widehat{A/K}$ is discrete, because $K$ is compact. Hence $K^\bot < \widehat{G}$ is an open compact subgroup with $\widehat{A}/K^\bot \cong \widehat{K}$. 

    By assumption there exist invariant metrics with associated polynomial growth estimates on the discrete groups $A/K$ and $\widehat{K}$. Since the growth estimates on discrete groups are necessarily inhomogeneous, there exist  growth estimates for $A/K, \widehat{K}$ with a \textit{common} growth rate $t$ (e.g. by taking the maximum of both rates). 
    Hence the proof of Proposition \ref{prop:ex_metric} provides metrics $d, \widehat{d}$ that both have homogeneous polynomial growth with rate $t+m$.
\end{proof}

\begin{remark} \label{rem:minimizer}
    For every proper continuous metric $d$, and every $f \in L^2(G)$ such that
    \[
    \int_G |x| |f(x)|^2 dx < \infty~,
    \] the map 
    \[
    V_f : G \ni a \mapsto \int_G |x-a|^2 |f(x)|^2 dx 
    \] is well-defined and continuous with $\lim_{a \to \infty} V_f(a)= \infty$. Hence a simple compactness argument yields that $V_f$ has a minimizer $a_0$, i.e. 
    \[
    \inf_{a \in G}  \int_G |x-a|^2 |f(x)|^2 dx =  \int_G |x-a_0|^2 |f(x)|^2 dx~.
    \] In fact, in the Euclidean case it is easy to show that 
    \[
    a_0 = \int_{\mathbb{R}^n} x |f(x)|^2 dx/ \| f \|_2^2
    \] is the unique minimizer. Note that in this case $a_0$ is nothing but the mean of the random variable with probability density given by $|f(x)|^2/\| f \|_2^2$.

    This process of determining minimizers is not readily available in more general locally compact groups, nor is the minimizer $a_0$ necessarily unique (see Example \ref{ex:counter_lf} below).  
\end{remark}

\begin{remark}
    Note that the homogeneous polynomial growth condition on $d$ and $\widehat{d}$ in Theorem \ref{thm:main} excludes discrete groups $G$, and compact groups by duality. However, the direct product $G = A \times K$ of an infinite discrete group $A$ with polynomial growth with a nondiscrete compact group $K$ such that $\widehat{K}$ falls in the realm of Theorem \ref{thm:main}.

    In particular, while our results do not allow to establish an uncertainty inequality for either $\mathbb{Z}$ or $\mathbb{T}$, they apply to $\mathbb{Z}^m \times \mathbb{T}^n$ for all $m,n>0$.
\end{remark}

\section{Proof of Theorem \ref{thm:main}}

Throughout this section $G$ is assumed to be a second-countable lca group; then $\widehat{G}$ shares these properties \cite{MR1038803}.
Furthermore,  $d,\widehat{d}$ are assumed to be continuous, invariant proper metrics of homogeneous polynomial growth on $G$ and $\widehat{G}$, respectively. 

Given $r>0$, let $P_r : L^2(G) \to L^2(G)$ denote the orthogonal projection map $f \mapsto f \cdot \mathbf{1}_{B_r(e)}$, where $\mathbf{1}_A$ denotes the indicator function associated to the measurable set $A$. The operator $\widehat{P}_r: L^2(\widehat{G}) \to L^2(\widehat{G})$ is analogously defined (with $\widehat{d}$ replacing $d$), and we define 
\[
Q_r = \mathcal{F}^{-1} \circ \widehat{P}_r \circ \mathcal{F}~,
\]
where $\mathcal{F} : L^2(G) \to L^2(\widehat{G})$ denotes the Plancherel transform. 
Furthermore, let 
\[
T: L^2(G) \to L^2(G), (Tf)(x) = \delta(x,e) f(x) ~. 
\] This operator is usually unbounded (unless $G$ has finite diameter), but it is  selfadjoint if taken with its natural domain $D(T) = \{ f : d(\cdot,e) f \in L^2(G) \}$. Note however that these aspects will not be of importance for the following. 

We let $\widehat{T}(f)$ denote the multiplication operator on the Fourier transform side, 
\[
\widehat{T} (f) = \mathcal{F}^{-1}(\widehat{d}(\cdot,e) \mathcal{F}(f))~.
\]

Observe that the operators $T$ and $\widehat{T}$ allow to rewrite the left-hand side of the uncertainty principle as 
\[
\int_G d(x,e)^2 |f(x)|^2 dx \int_{\widehat{G}} \widehat{d}(\xi,e)^2 |\widehat{f}(\xi)|^2 d\xi = \| T f\|_2^2 \|\widehat{f} \|_2^2~. 
\]
The overall proof strategy to obtain a lower bound is vaguely similar to the one employed in \cite{MR3336090}, relating norm estimates on $\| Tf \|_2 $ and $\| \widehat{T} f\|_2$ to concentration estimates for $f, \widehat{f}$ respectively, and then employing a well-known bound for the product of the latter. 

We first need a result concerning measure disintegration.
\begin{lemma}
    Let $d$ denote a continuous metric on $G$. Then Haar measure disintegrates over the spheres, i.e. there exists a family of Borel measures $(\mu_r)_{ r \ge 0}$ with each $\mu_r$ supported in $S_r(e)$, as well as a Borel measure $\nu$ on $\mathbb{R}_0^+$, such that for each nonnegative Borel function $f: G \to \mathbb{R}_0^+$ one has the equality 
    \[
    \int_G f(x) dx = \int_{\mathbb{R}_0^+} \int_{S_r(e)} f(x) d\mu_r(x) d\nu(r)  
    \]
\end{lemma}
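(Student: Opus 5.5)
The plan is to apply the disintegration theorem for measures to the radius map $\rho : G \to \mathbb{R}_0^+$, $\rho(x) = d(x,e)$, and take $\nu$ to be the push-forward of Haar measure under $\rho$. Since $d$ is continuous, $\rho$ is continuous and hence Borel. Its fibres are exactly the spheres $\rho^{-1}(\{r\}) = S_r(e) = \{x \in G : d(x,e) = r\}$. We also use the standing assumption that $G$ is second countable. Then $G$ is a locally compact, $\sigma$-compact, metrizable space, hence Polish, and Haar measure $\lambda$ is $\sigma$-finite and Radon.

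First I reduce to a finite measure. Choose a strictly positive continuous function $w$ with $\int_G w \, dx = 1$. Such a $w$ exists by $\sigma$-compactness: take $w = \sum_n 2^{-n} \varphi_n / (1+\|\varphi_n\|_1)$, where the $\varphi_n \ge 0$ are compactly supported continuous functions and every point of $G$ lies in the positivity set of some $\varphi_n$, then normalize. Put $\lambda_w = w\,\lambda$, a Borel probability measure on the Polish space $G$, and let $\nu_w = \rho_*\lambda_w$, a probability measure on $\mathbb{R}_0^+$. The classical disintegration theorem for Borel probability measures on Polish spaces under a Borel map into a Polish space (as in Dellacherie--Meyer or Bogachev) then gives a $\nu_w$-a.e. uniquely determined Borel family of probability measures $\mu^w_r$ on $G$. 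For $\nu_w$-a.e. $r$, $\mu^w_r$ is concentrated on $\rho^{-1}(\{r\}) = S_r(e)$, and $\int_G g\, d\lambda_w = \int \int g\, d\mu^w_r\, d\nu_w(r)$ for all nonnegative Borel $g$. On the $\nu_w$-null exceptional set of $r$ I redefine $\mu^w_r$ as $0$, or as a point mass on $S_r(e)$ when that sphere is nonempty. This makes every $\mu_r$ supported in $S_r(e)$ without changing the integral.

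Next I undo the weight. Given nonnegative Borel $f$, apply the formula to $g = f/w$; this is Borel and nonnegative because $w > 0$ is continuous. Define $\mu_r = w^{-1}\mu^w_r$ and $\nu = \nu_w$. Then $\int_G f\,dx = \int f/w \, d\lambda_w = \int_{\mathbb{R}_0^+}\int_{S_r(e)} f \, d\mu_r \, d\nu(r)$, which is the claimed identity. Each $\mu_r$ is a Borel measure supported in $S_r(e)$. It is possibly infinite, but it is finite on compact sets because $w^{-1}$ is bounded there. Measurability of $r \mapsto \int f\,d\mu_r$ follows from that of $r \mapsto \int (f/w)\,d\mu^w_r$, which the disintegration theorem provides. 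If one prefers $\nu$ to be the push-forward of Haar measure itself, $\rho_*\lambda$ is $\sigma$-finite on $\mathbb{R}_0^+$ when $d$ is proper. One can then rescale by the Radon--Nikodym derivative $d\nu_w/d\rho_*\lambda$, but this is not needed.

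The main obstacle is ensuring the hypotheses of the abstract disintegration theorem hold. These are the Polish structure, which needs second countability (assumed throughout the section), and finiteness of the measure, which is handled by the weight $w$. One also has to check that the measures are supported on the full sphere for every $r$, not merely for almost every $r$. This last point is handled by the redefinition on a null set described above. Everything else is bookkeeping with Fubini-type monotone class arguments.
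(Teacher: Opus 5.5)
Your proposal is correct and follows essentially the same route as the paper. Both multiply Haar measure by a strictly positive weight of integral one, apply the disintegration theorem for probability measures on Polish spaces to the continuous map $x \mapsto d(x,e)$ (the paper cites Ambrosio--Gigli--Savar\'e, Theorem 5.3.1), and divide the resulting fibre measures by the weight. Your redefinition of the fibre measures on a $\nu$-null set also settles a point the paper passes over: the disintegration theorem only gives concentration on $S_r(e)$ for $\nu$-almost every $r$. Setting the measures to $0$ there suffices, since the point-mass option would additionally require a Borel selection $r \mapsto x_r \in S_r(e)$.
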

\begin{proof}
   As $G$ is second-countable, it is completely metrizable. Furthermore, there exists a function $f: G \to \mathbb{R}^+$ with $\int_G f(x) dx = 1$. Hence $\mu = f dx$ is a probability measure on $G$, and $p: G \ni x \mapsto d(x,e) \in \mathbb{R}$ is a continuous map  between two completely metrizable spaces. 
   Now the measure disintegration theorem \cite[Theorem 5.3.1]{MR2401600} shows the existence of a measure disintegration $(\tilde{\mu}_r)_{ r \ge 0}$, $\nu$ of $\mu$, with $\nu$ being the image measure of $\mu$ under $p$, and $\tilde{\mu}_r$ supported in $S_r(e)$. But then $d\mu_r(x) = \frac{1}{f(x)} d\tilde{\mu}_r(x)$ together with $\nu$ provides the desired measure disintegration of Haar measure. 
\end{proof}


The following proposition formulates the concentration estimate, stating that any normalized function $f$ with $\| T f \|_2< \infty$ is essentially concentrated in a ball of radius comparable to $\| T f \|_2$.
\begin{proposition} \label{prop:main}
Let $\delta \in (0,1/2)$ be given, as well as $f \in L^2(G)$ with $\| f \|_2 = 1$ and $\| Tf \|_2 < \infty$. Then there exists $r_0 >0$ satisfying 
\[
\forall r > r_0: \| P_{r} f \|_2 \ge 1 - \delta~, \mbox{ and } r_0 \le \frac{\| T f \|_2}{\left(2 \delta - \delta^2 \right)^{1/2}}~.
\]
\end{proposition}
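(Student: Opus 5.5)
This is a Chebyshev-type (Markov) inequality for the measure $|f(x)|^2\,dx$, with the distance function $d(\cdot,e)$ as the random variable. I will use the identity
\[
\| T f\|_2^2 = \int_G d(x,e)^2 |f(x)|^2\, dx ,
\]
where $T$ is understood as multiplication by $d(\cdot,e)$. Together with $\|f\|_2=1$ and orthogonality this gives
\[
\|(I-P_r)f\|_2^2 = \int_{G\setminus B_r(e)} |f(x)|^2\,dx .
\]
On the set $G\setminus B_r(e)$ we have $d(x,e)\ge r$. Hence for every $r>0$
\[
r^2 \,\|(I-P_r) f\|_2^2 \le \int_{G \setminus B_r(e)} d(x,e)^2 |f(x)|^2 dx \le \|Tf\|_2^2 .
\]
The measure disintegration lemma is not needed here; the pointwise bound $d(x,e)\ge r$ suffices.

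Next I translate the desired conclusion into a statement about the tail mass. Since $P_r$ is an orthogonal projection and $\|f\|_2=1$,
\[
\|P_r f\|_2^2 = 1 - \|(I-P_r)f\|_2^2 .
\]
So $\|P_r f\|_2\ge 1-\delta$ is equivalent to $\|(I-P_r)f\|_2^2 \le 1-(1-\delta)^2 = 2\delta-\delta^2$. This value is positive for $\delta\in(0,1/2)$. By the Chebyshev bound above, this holds as soon as $\|Tf\|_2^2/r^2 \le 2\delta-\delta^2$, i.e. as soon as $r \ge \|Tf\|_2/(2\delta-\delta^2)^{1/2}$.

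I then set
\[
r_0 := \frac{\|Tf\|_2}{(2\delta-\delta^2)^{1/2}} .
\]
Every $r>r_0$ then satisfies $\|P_r f\|_2 \ge 1-\delta$, and the bound on $r_0$ holds with equality.

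The one degenerate point is that the statement requires $r_0>0$, whereas $\|Tf\|_2=0$ is impossible. Indeed, $\|Tf\|_2=0$ would force $f$ to be supported on $\{e\}$. If $\{e\}$ has positive Haar measure, $G$ is discrete, and $f=c\,\mathbf 1_{\{e\}}$ is then allowed. In that case $\|P_r f\|_2 = 1$ for all $r>0$, so any small $r_0>0$ works; strictly speaking, though, it would violate $r_0 \le 0$. I would handle this by noting that under the standing assumptions (homogeneous polynomial growth, hence $|\{e\}|\le \lim_{r\to0} A r^t = 0$) $G$ is nondiscrete, so $\|Tf\|_2>0$ and $r_0>0$. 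This edge case is the only mild obstacle.
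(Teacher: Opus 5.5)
Your proof is correct. It uses the same core idea as the paper, Markov's inequality for the second moment of $d(\cdot,e)$ under the probability measure $|f(x)|^2\,dx$, but it is more direct in two respects.

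First, the paper goes through the disintegration of Haar measure over the spheres $S_r(e)$. It does this to produce a real random variable $R$ with $\mathbb{E}(R^2)=\|Tf\|_2^2$ and $\mathbb{P}(R\ge r)=\int_{d(x,e)\ge r}|f(x)|^2\,dx$. As you observe, this detour is unnecessary: the law of $R$ is just the push-forward of $|f|^2\,dx$ under $x\mapsto d(x,e)$, and the pointwise bound $d(x,e)\ge r$ off $B_r(e)$ gives the tail estimate immediately.

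Second, the paper defines $r_0$ as the sharp threshold $\sup\{r\ge 0:\int_{d(x,e)\ge r}|f|^2>2\delta-\delta^2\}$ and then bounds it via Markov. You instead take $r_0$ equal to the bound itself. The paper's $r_0$ can be smaller, but the proof of Theorem~\ref{thm:main} uses only the two stated properties: one lets $r\to r_0$ and then substitutes the upper bound on $r_0$. So your choice loses nothing there.

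Your handling of $r_0>0$ is more careful than the paper's, which asserts $r_0\in(0,\infty)$ without comment. The positivity of the paper's threshold also depends on $|\{e\}|=0$, which makes the tail mass tend to $1$ as $r\downarrow 0$. As you note, $|\{e\}|=0$ follows from the standing assumption of homogeneous polynomial growth.
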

\begin{proof} The core idea of the proof is to derive a concentration estimate by applying Markov's inequality to a suitably defined random variable. The measure decomposition over the $d$-spheres allows to write
\begin{eqnarray*}
 1 & = & \int_G |f(x)|^2 dx \\
  & = & \int_{\mathbb{R}_0^+} \int_{S_r(e)} |f(x)|^2 d\mu_r(x) d\nu(r)~,
\end{eqnarray*}
hence letting  
\[
d\nu_f(r) = \int_{S_r(e)} |f(x)|^2 d\mu_r(x) d\nu(r)
\] defines a probability measure $\nu_f(r)$ on $\mathbb{R}_0^+$. From now on let $R$ denote a random variable with probability given by $\nu_f$. We then obtain
\begin{eqnarray}
 \nonumber    \mathbb{E}(R^2) & = & \int_{\mathbb{R}_0^+} r^2 \int_{S_r(e)} |f(x)|^2 d\mu_r(x) d\nu(r) \\
 \nonumber   & = & \int_{\mathbb{R}_0^+} \int_{S_r(e)} d(x,e)^2 |f(x)|^2 d\mu_r(x) d\nu(r) \\
 \label{eqn:expectation}   & = & \int_G d(x,e)^2 |f(x)|^2 dx = \| T f \|_2^2 ~,
\end{eqnarray}
where the last equality used the measure decomposition property. 
Now let 
\[
r_0 = \sup \left\{ r \ge 0: \int_{d(x,e) \ge r} |f(x)|^2 dx > 2 \delta - \delta^2 \right\}~.
\]
$r_0 \in (0,\infty)$ is well-defined since 
\[
\int_{d(x,e) \ge r} |f(x)|^2 dx = \mathbb{P} (R \ge r) \to 0 ~,
\]
 as $r \to \infty$. 
 Now, on the one hand, the definition of $r_0$ as supremum implies for $r>r_0$
 \begin{equation} \label{eqn:deliverable_1}
  \| P_{r} f \|_2^2  = 1 - \int_{d(x,e) \ge r} |f(x)|^2 dx \ge 1-2 \delta + \delta^2 = (1-\delta)^2~. 
 \end{equation}
 
 For any $r < r_0$ one has by definition 
 \[ 2 \delta - \delta^2 \le \int_{d(x,e) \ge r}|f(x)|^2 dx = P(R \ge r) \le \frac{\mathbb{E}(R^2)}{r^2} = \frac{\| T f \|_2^2}{r^2}~, \]
 where we used the Markov inequality for the second moment, as well as equation (\ref{eqn:expectation}).
This yields equivalently 
\[ r^2 \le \frac{\| Tf \|_2^2}{2 \delta - \delta^2}
\] for all $r < r_0$, and then $r \to r_0$ yields
\[
r_0 \le \frac{\| T f\|_2}{\left(2 \delta - \delta^2\right)^{1/2}}~.
\]
\end{proof}

\begin{proof}[Proof of Theorem \ref{thm:main}]
We first show the desired inequality for the special case $a=e, b=e$. Since both sides of the inequality are homogeneous of order 4 in $f$, we may assume that $\| f \|_2 = 1$. Additionally we assume that $\| T f \|_2 < \infty$ and $\| \widehat{T} f \|_2< \infty$, since the inequality is trivial otherwise. 

By Proposition \ref{prop:main} there exists $r_0>0$ satisfying 
\begin{equation} \label{eqn:r_0}
\forall r> r_0 \,:\, \| P_{r} f \|_2 \ge 1 - \delta~, \mbox{ and } r_0 \le \frac{\| T f \|_2}{\left(2 \delta - \delta^2 \right)^{1/2}}~.
\end{equation} The same proposition applied in Fourier domain yields $s_0 >0$ satisfying
\begin{equation} \label{eqn:s_0}
\forall s > s_0 \,:\, \| Q_{s} f \|_2 \ge 1 - \delta~, \mbox{ and } s_0 \le \frac{\| \widehat{T} f \|_2}{\left(2 \delta - \delta^2 \right)^{1/2}}~.
\end{equation} It follows for $r>r_0$ and $s>s_0$ that 
\[
\| f- P_{r} Q_{s} f \|_2 \le \| f - P_{r} f \|_2 + \| P_{r} f  - P_{r} Q_{s} f \|_2 \le 2 \delta
\]
or
\begin{equation} \label{eqn:PQf}
\| P_{r} Q_{s} f \|_2 \ge 1 - 2 \delta~.
\end{equation}
But there is also a fairly standard approach to obtain an upper estimate of $\| P_{r} Q_{s} f \|_2$, exploited in a similary way in \cite{MR997928}: The convolution theorem provides that $Q_s f = f \ast G_s$, where $G_s$ is the inverse Plancherel transform of $\mathbf{1}_{\widehat{B}_s(e)}$. Accordingly, 
\[
P_r Q_s f(x) = \int_G K(x,y) f(y) dy
\]
with the integral kernel given as $K(x,y) = \mathbf{1}_{B_r(e)}(x) G_s(xy^{-1})$. It follows that $P_r Q_s$ is a Hilbert-Schmidt operator, whose operator norm obeys
\[ \| P_r Q_s \|_{op}^2 \le \| P_r Q_s \|_{HS}^2 =  |B_r(e)| \|G_s \|_2^2 = |B_{r}(e)| |\widehat{B}_{s}(e)| \,\,,
\]
where $\| \cdot \|_{HS}$ denotes the Hilbert-Schmidt norm. 

In summary we obtain 
\begin{eqnarray*}
1-2 \delta & \le & \| P_{r} Q_{s} f \|_2 \le |B_{r}(e)|^{1/2} |\widehat{B}_{s}(e)|^{1/2} \\
& \le & A^{1/2} \widehat{A}^{1/2} r^{t/2} s^{t/2}~,
\end{eqnarray*}
using homogeneous polynomial growth.
Letting $r\to r_0$ and $s \to s_0$ then results in 
\[
1-2 \delta \le  A^{1/2} \widehat{A}^{1/2} r_0^{t/2} s_0^{t/2}~.
\]
Now employing the second equations of (\ref{eqn:r_0}) and (\ref{eqn:s_0}) we obtain
\[
    1 - 2 \delta \le  A^{1/2} \widehat{A}^{1/2} \frac{\| Tf \|_2^{t/2} \|\widehat{T} f \|_2^{t/2}}{(2 \delta - \delta^2)^t}
\]
which is rearranged to 
\[
\| Tf \|_2^2 \|\widehat{T} f \|_2^2 \ge \frac{(1-2\delta)^{4/t}(2 \delta - \delta^2)^4}{A^{2/t} \widehat{A}^{2/t}}~.
\]
Maximizing the lower bound over $\delta \in (0,1/2)$ gives
\begin{equation} \label{eqn:first_case}
\forall f \in L^2(G) ~:~ \left( \int_G d(x,e)^2 |f(x)|^2 dx \int_{\widehat{G}} \widehat{d}(\xi,e)^2 |\widehat{f}(\xi)|^2 d\xi \right) \ge C_G \|f\|_2^4~,
\end{equation} with $C_G$ given by (\ref{eqn:def_C_G}).

It remains to extend the estimate to the infimum over $a,b$, which is done by a standard trick, compare the introduction of \cite{MR1448337}:
Given any $a \in G, b \in \widehat{G}$, let
\[
f_{a,b}(x) = b(x) f(ax)~,
\] which is a time-frequency shift of $f$ by $(a,b) \in G \times \widehat{G}$. Then it is straightforward to verify that 
\begin{equation} \label{eqn:shifts}
|f_{a,b}(x)|= |f(ax)|~,|\widehat{f_{a,b}(\xi)}| = |\widehat{f} (b \xi)|
\end{equation}
It follows that 
\begin{eqnarray*}
\int_G d(x,a)^2 |f(x)|^2 dx \int_{\widehat{G}} d(\xi,b)^2 |\widehat{f}(\xi)|^2 d\xi 
& = & \int_G d(x,e) |f_{a,b}(x)|^2 dx \int_{\widehat{G}} d(\xi,e)^2 |\widehat{f_{a,b}}(\xi)|^2 d\xi \\
& \ge & C_G \| f_{a,b} \|_2^4 
\end{eqnarray*}
by (\ref{eqn:first_case}). But (\ref{eqn:shifts}) clearly also entails $\| f_{a,b} \|_2 = \| f  \|_2$, and the theorem is proved. 
\end{proof}

\section{Examples}

Clearly the relevance of Theorem \ref{thm:main} depends on the availability of metrics fulfilling the requirements; each such pair of metrics then gives rise to its own uncertainty inequality. Understanding how these different inequalities relate to each other is one possible direction of further research. 

We will now present two examples of groups to which our main result is applicable. The first is the well-understood Euclidean case, and it provides an occasion to gauge the precision of our approach. 

\subsection{The Euclidean case}

If $G = \mathbb{R}^n$, one naturally obtains the dual group $\widehat{G} = \mathbb{R}^n$. If one uses the Fourier transform
\[ \widehat{f}(\xi) = \int_{\mathbb{R}^n} f(x) e^{-2 \pi i \langle x, \xi \rangle} dx\, \,, \]
then the standard Lebesgue measure serves as Haar measure on both $G$ and $\widehat{G}$. We use the Euclidean metric on both groups, $d(x,y) = | x - y |$. The metric has homogeneous polynomial growth with rate $t=n$, since 
\[ |B_r(0)| = V_n r^n\,\,, \]
where $A = V_n = \frac{\pi^{n/2}}{\Gamma(\frac{n}{2}+1)}$ is the volume of the $n$-dimensional unit ball. Hence we essentially retrieve the standard Heisenberg uncertainty inequality
\[
\inf_{a,b \in \mathbb{R}^n}
\int_{\mathbb{R}^n} |x-a|^2 |f(x)|^2 dx \int_{\mathbb{R}^n}|\xi-b|^2 |\widehat{f}(\xi)|^2 d\xi \ge 
C_G \| f \|_2^4
\]
with
\[
C_G = \frac{\alpha_n}{V_n^{4/n}}~,~ \alpha_n = \max_{\delta \in (0,1/2)} (1-2 \delta)^{4/n} (2 \delta - \delta^2)^4~.
\]
There does not seem to exist a readily available closed form solution for $\alpha_n$. However, the computation of $\alpha_1$ amounts to determining the maximizer $\delta^*$ of 
\[
h(\delta) = (1-2\delta) (2 \delta - \delta^2) = 2\delta - 5 \delta^2 + \delta^3
\] on $(0,1/2)$, and then computing $\alpha_1 = h(\delta)^4$. This leads to 
\[
\delta^* = \frac{5-\sqrt{13}}{6}~,
\] and 
\[
\alpha_1 = \left( \frac{13\sqrt{13}-35}{54} \right)^4 \approx 0.0023363875
\] Using $V_1=2$, we then get
\[
C_{\mathbb{R}^1} = \frac{\alpha_1}{16}~.
\] The comparison to the optimal lower bound  
\[ C_{\mathbb{R}^1,\mathrm{opt}} = \frac{1}{16 \pi^2} ~,\] 
yields the quotient
\[
\frac{C_{\mathbb{R}^1}}{C_{\mathbb{R}^1,\mathrm{opt}}} = \alpha_1 \pi^2 \approx 0.0230592210~.
\] Hence the lower bound provided by Theorem \ref{thm:main} is indeed suboptimal, by a factor that is substantial if not astronomical.  

We next consider the asymptotic behaviour of our estimate. To begin with, the fact that 
\[ (1-2 \delta)^{4/t} \to 1\,\,, t \to \infty \]
uniformly on each interval $(0,1/2-\epsilon)$ allows to conclude
\begin{equation} \label{eqn:lim_at}
\lim_{t \to \infty} \alpha_t = \left( \frac{3}{4} \right)^4 \,\,.
\end{equation}
By \cite[Corollary 2.8]{MR1448337}, the optimal lower bound in dimension $n$ is given by $C_{\mathbb{R}^n,\mathrm{opt}} = \frac{n^2}{16 \pi^2}$. Using $V_n = \frac{\pi^{n/2}}{\Gamma(\frac{n}{2}+1)}$ and (\ref{eqn:lim_at}), the limit behaviour of the quotient is then determined as 
\begin{eqnarray*}
\lim_{n \to \infty} \frac{C_{\mathbb{R}^n}}{C_{\mathbb{R}^n,\mathrm{opt}}} & = & \lim_{n \to \infty}
\frac{\alpha_n}{V_n^{4/n}} \frac{16 \pi^2}{n^2} \\
& = & 16 \left( \frac{3}{4} \right)^4 \lim_{n \to \infty} \left( \frac{\Gamma(\frac{n}{2}+1)^{1/n}}{n^{1/2}} \right)^4 \\ & = & 16 \left( \frac{3}{4} \right)^4 \frac{1}{e^2} \\ & \approx & 0.17128371
\end{eqnarray*}
where the second-to-last equation used the Stirling approximation to the Gamma-function. Hence the lower bound provided by Theorem \ref{thm:main} asymptotically improves, and scales like the optimal lower bound. 

\subsection{Local fields}
\label{subsect:loc_field}

Let $K$ denote a totally disconnected local field, which is a locally compact, secound countable topological field that is not connected. 
We assume familiarity with the basic properties of these fields, and refer to \cite[Chapters I,II]{taible} for the necessary background information. We write $K$ additively, and denote its neutral element by $0$. 
It is well-known that there exists a \textit{valuation} on $K$, i.e. a map $|\cdot|: K \to \mathbb{R}_0^+$ with the following properties: 
\begin{itemize}
    \item $|x|=0$ if and only if $x=0$.
    \item $|xy|=|x||y|$ for all $x,y \in \mathit{K}$.
    \item $|x+y| \leq \,\max\,\{|x|,|y|\}$ for all $x,y \in \mathit{K}$ (ultrametric inequality). 
    \item For all Borel measurable subsets $A \subset K$ and all $x \in K$, $|xA| = |x| |A|$, where $xA = \{ x y: y \in A \}$, using multiplication within $K$.
\end{itemize}
We define the invariant metric via $d(x,y) = |x-y|$ on $G$. This metric induces the topology on $K$, hence it is continuous, and in fact proper. 
Since $\mathit{K}$ is totally disconnected, it turns out that the set of absolute values is of the form $\{q^k \,:\,k \in \mathbb{Z}\} \cup \{0\}$ for a suitable prime power $q > 0$. 

We normalize Haar measure on $K$ by the requirement $|B_q(0)| = \{ x \in K : |x| \le 1 \} = 1$. Given arbitrary $r>0$ and $m \in \mathbb{Z}$ fulfilling $q^m < r \le q^{m+1}$ that 
\[ |B_r(0)|  = |B_{q^{m+1}}(0)| = q^m < r \,\] which establishes polynomial growth for $d$, with constants $A = t = 1$.

Turning to the dual group, $K$ is canonically identified with $\widehat{K}$ by the following procedure: Pick any nontrivial character $\chi \in \widehat{K}$ with kernel equal to $B_{1}(0)$. Then, given any $x \in \mathbb{K}$, let $\chi_x : y \mapsto \chi(x y)$, where $xy$ denotes the product of the field elements in $K$. It turns out that the map $x \mapsto \chi_x$ is a topological isomorphism $K \to \widehat{K}$ \cite[1.8]{taible}. Furthermore, the Plancherel formula $\| f \|_2 = \|\widehat{f}\|_2$ holds for all $f \in L^1(K) \cap L^2(K)$ \cite[II.2]{taible}, hence the Plancherel measure coincides with Haar measure, and we may regard the Plancherel transform as a unitary map $L^2(K) \to L^2(K)$. We therefore endow the Fourier transform side with the same invariant metric, and obtain the following uncertainty inequality: 
\begin{equation} \label{eqn:HI_lf}
\inf_{a,b \in K}
\int_{K} |x-a|^2 |f(x)|^2 dx \int_{K} |\xi-b|^2 |\widehat{f}(\xi)|^2 d\xi \ge 
\alpha_1 \| f \|_2^4\,\,.
\end{equation}

Note that $K$ actually violates the qualitative uncertainty principle introduced in \cite{MR1353372}: There exist nonzero functions $f$ such that both $f$ and $\widehat{f}$ have compact supports. 

In the case where $K$ is the field of $p$-adic numbers, there already exists an uncertainty relation in the literature, namely
\begin{equation} \label{eqn:HI_padic}
\int_{K} |x-a_0|^2 |f(x)|^2 dx \int_{K} |\xi-b_0|^2 |\widehat{f}(\xi)|^2 d\xi \ge 
\frac{\| f \|_2^4}{16 \pi^2}~,
\end{equation}
see \cite[Theorem 5.2]{MR2126447}. Here $a_0, b_0 \in K$ are computed as \textit{centers of gravity} in $G$ \cite[Definition 4.1]{MR2126447}, i.e. as weighted averages in $K$, with weights $|f(x)|^2$ and $|\widehat{f}(\xi)|^2$ respectively. This approach is analogous to the way the translations minimizing the left-hand side of the uncertainty inequality are determined in the real case (compare Remark \ref{rem:minimizer}), but it relies on a suitable identification of $K$ with a subset of the positive reals. Despite the terminology there is no indication in the cited source that the center of gravity indeed minimizes the concentration integral, and the left-hand side of (\ref{eqn:HI_lf}) is by definition less than or equal to the left-hand side of (\ref{eqn:HI_padic}). Note further that the scope of (\ref{eqn:HI_lf}) is strictly larger than that of its counterpart, since \cite[Theorem 5.2]{MR2126447} imposes additional constraints on the function $f$ and on the local field. 

\begin{remark} \label{ex:non_fg}
    A local field $K$ is an example of a nondiscrete lca group possessing an open compact subgroup, for example $S = B_q(0)$. The quotient metric on $K/S$ induced by the valuation is polynomially bounded, even though the discrete group $K/S$ is not finitely generated.
\end{remark}

\begin{remark} \label{ex:counter_lf}
    The function $f = \mathbf{1}_{B_1(0)}$ provides a simple example where the minimizer of 
    \[
    a \mapsto \int_{K} |x-a|^2 |f(x)|^2 dx 
    \] is not unique. 
  Indeed, the fact that the set $B_1(0)$ is invariant under translations by $x \in B_1(0)$ (it is an additive subgroup) entails 
    \[
    \int_{K} |x-a|^2 |f(x)|^2 dx = \int_{K} |x-a'|^2 |f(x)|^2 dx
    \] as soon as $a-a' \in B_1(0)$. Note that $\widehat{f} = f$, hence the same nonuniqueness phenomenon holds also for $\widehat{f}$. 
\end{remark}

\begin{remark} 
A well-known approach to the proof of the Heisenberg uncertainty principle on $\mathbb{R}^1$ is based on the commutator estimate 
\begin{equation} \label{eqn:comm_est}
\| Xf \|_2 \| Y f \|_2 \ge |\langle [X,Y] f, f \rangle |\,\,,
\end{equation}
 see e.g. \cite[Lemma 2.1]{MR1448337}.
In the real case, on can take $X$ as the multiplication operator $Xf (x) = x f(x)$ and $Y$ the differentiation operator $Yf(x) = f'(x)$. Both are densely defined selfadjoint operators om $L^2(\mathbb{R})$, and their commutator turns out to be 
\[ [X,Y] = 2 \pi i {\rm id}_{L^2} \,\,.\] Plugging this into (\ref{eqn:comm_est}) directly yields (\ref{eqn:HI}).

One might hope that in some instances the multiplication operators $T, \widehat{T}$ used in our proof of Theorem \ref{thm:main} could serve in a similar way. However, the case of local fields provides an example where this approach fails dramatically: Let $g \in L^2(K)$ denote the function with Fourier transform given by 
\[ \widehat{g} = \mathbf{1}_{B_q(0)} - \mathbf{1}_{B_{1}}(0) \,\,\] which means that $\widehat{g}$ is precisely supported in the sphere 
$S_{1}(0)$. As a consequence, $\widehat{T} g = g$. 

$g$ is then supported in the ball $B_{q^2}(0) = \{ x \in K : |x| \le q \}$. The ultrametric triangle inequality then yields for any $a \in G$ satisfying $|a| \ge q^2$ that the translate $g_a$
defined by $g_a(x) = g(x-a)$ is supported in
\[ B_{q(a)} \subset S_{|a|}(0)\,\,.  \]  Hence $T g_a = |a| g_a$. On the other hand, $\widehat{g_a}$ is still supported in $S_{1}(0)$, which proves $\widehat{T} g_a = g_a$.

In particular, 
\[ [T,\widehat{T}] g_a = 0 \]
and the lower bound provided by (\ref{eqn:comm_est}) is zero, even though the uncertainty principle holds.  
\end{remark}



\bibliographystyle{abbrv}
\bibliography{sn-bibliography-2}

\begin{thebibliography}{10}

\bibitem{MR2401600}
L.~Ambrosio, N.~Gigli, and G.~Savar\'e.
\newblock {\em Gradient flows in metric spaces and in the space of probability
  measures}.
\newblock Lectures in Mathematics ETH Z\"urich. Birkh\"auser Verlag, Basel,
  second edition, 2008.

\bibitem{MR1353372}
D.~Arnal and J.~Ludwig.
\newblock Q.{U}.{P}.\ and {P}aley-{W}iener properties of unimodular, especially
  nilpotent, {L}ie groups.
\newblock {\em Proc. Amer. Math. Soc.}, 125(4):1071--1080, 1997.

\bibitem{MR2587584}
A.~Baklouti and E.~Kaniuth.
\newblock On {H}ardy's uncertainty principle for solvable locally compact
  groups.
\newblock {\em J. Fourier Anal. Appl.}, 16(1):129--147, 2010.

\bibitem{MR780328}
M.~Benedicks.
\newblock On {F}ourier transforms of functions supported on sets of finite
  {L}ebesgue measure.
\newblock {\em J. Math. Anal. Appl.}, 106(1):180--183, 1985.

\bibitem{MR4744878}
A.~Chattopadhyay, D.~Giri, and R.~K. Srivastava.
\newblock Qualitative uncertainty principle on certain {L}ie groups.
\newblock {\em J. Aust. Math. Soc.}, 116(3):289--307, 2024.

\bibitem{MR3336090}
P.~Ciatti, M.~G. Cowling, and F.~Ricci.
\newblock Hardy and uncertainty inequalities on stratified {L}ie groups.
\newblock {\em Adv. Math.}, 277:365--387, 2015.

\bibitem{MR2355602}
P.~Ciatti, F.~Ricci, and M.~Sundari.
\newblock Heisenberg-{P}auli-{W}eyl uncertainty inequalities and polynomial
  volume growth.
\newblock {\em Adv. Math.}, 215(2):616--625, 2007.

\bibitem{MR2126447}
M.~Cui and Y.~Zhang.
\newblock The {H}eisenberg uncertainty relation in harmonic analysis on
  {$p$}-adic numbers field.
\newblock {\em Ann. Math. Blaise Pascal}, 12(1):181--193, 2005.

\bibitem{MR997928}
D.~L. Donoho and P.~B. Stark.
\newblock Uncertainty principles and signal recovery.
\newblock {\em SIAM J. Appl. Math.}, 49(3):906--931, 1989.

\bibitem{MR1448337}
G.~B. Folland and A.~Sitaram.
\newblock The uncertainty principle: a mathematical survey.
\newblock {\em J. Fourier Anal. Appl.}, 3(3):207--238, 1997.

\bibitem{MR1574130}
G.~H. Hardy.
\newblock A {T}heorem {C}oncerning {F}ourier {T}ransforms.
\newblock {\em J. London Math. Soc.}, 8(3):227--231, 1933.

\bibitem{MR551496}
E.~Hewitt and K.~A. Ross.
\newblock {\em Abstract harmonic analysis. {V}ol. {I}}, volume 115 of {\em
  Grundlehren der Mathematischen Wissenschaften}.
\newblock Springer-Verlag, Berlin-New York, second edition, 1979.
\newblock Structure of topological groups, integration theory, group
  representations.

\bibitem{MR2122203}
E.~Matusiak, M.~\"Ozayd\i~n, and T.~Przebinda.
\newblock The {D}onoho-{S}tark uncertainty principle for a finite abelian
  group.
\newblock {\em Acta Math. Univ. Comenian. (N.S.)}, 73(2):155--160, 2004.

\bibitem{MR1181081}
R.~Meshulam.
\newblock An uncertainty inequality for groups of order {$pq$}.
\newblock {\em European J. Combin.}, 13(5):401--407, 1992.

\bibitem{MR564688}
S.~A. Morris.
\newblock Duality and structure of locally compact abelian groups{$\ldots $}for
  the layman.
\newblock {\em Math. Chronicle}, 8:39--56, 1979.

\bibitem{MR3265736}
B.~Ricaud and B.~Torr\'esani.
\newblock A survey of uncertainty principles and some signal processing
  applications.
\newblock {\em Adv. Comput. Math.}, 40(3):629--650, 2014.

\bibitem{MR1038803}
W.~Rudin.
\newblock {\em Fourier analysis on groups}.
\newblock Wiley Classics Library. John Wiley \& Sons, Inc., New York, 1990.
\newblock Reprint of the 1962 original, A Wiley-Interscience Publication.

\bibitem{taible}
M.~H. Taibleson.
\newblock {\em Fourier analysis on local fields}, volume~15.
\newblock Princeton University Press, 2015.

\bibitem{MR4229152}
A.~Wigderson and Y.~Wigderson.
\newblock The uncertainty principle: variations on a theme.
\newblock {\em Bull. Amer. Math. Soc. (N.S.)}, 58(2):225--261, 2021.

\end{thebibliography}



\end{document}